\def\bd{\begin{displaymath}}
\def\ed{\end{displaymath}}
\def\be{\begin{equation}}
\def\ee{\end{equation}}
\newtheorem{theorem}{Theorem}
\newtheorem{lemma}{Lemma}
\newtheorem{question}{Question}
\newtheorem{example}{Example}
\theoremstyle{definition}
\begin{document}

\title{On prime   values  of cyclotomic polynomials}

\author{Pantelis A.~Damianou}
\address{Department of Mathematics and Statistics\\
University of Cyprus\\
P.O.~Box 20537, 1678 Nicosia\\Cyprus} \email{damianou@ucy.ac.cy}

\begin{abstract}
We present several approaches  on finding  necessary and sufficient  conditions on  $n$ so that $\Phi_k(x^n)$ is irreducible where $\Phi_k$ is the $k$-th cyclotomic polynomial.
\end{abstract}

\maketitle

\section{Introduction}

\bigskip

   Let $ U_n=\{ z \in {\bf C} \ | \ z^n =1 \} $  denote the  set of $n$th roots of unity. $U_n$ is a group under complex multiplication, in fact   a cyclic group.  A complex number $\omega$ is called a {\it primitive
$n$th root of unity} provided $\omega$  is an $n$th root of unity
and has order $n$ in the group $U_n$. Such an $\omega$ generates the group, i.e.
$\omega, \omega^2, \dots, \omega^n=1 $ coincides with the set
$U_n$.  An element of $U_n$ of the form $\omega^k$ is a primitive
root of unity iff $k$ is relatively prime to $n$. Therefore the
number of primitive roots of unity  is equal to $\phi(n)$ where $\phi$ is
Euler's totient function.

 There are  three cube roots of unity: $1$, $\omega=  (-1 +\sqrt{3} i) / 2
$ and  $\omega^{2}$; only  $\omega$ and $\omega^2$ are primitive.
There are  two  primitive fourth roots of unity: $\pm i$. They
are  roots of the polynomial $x^2+1$.

We define the $n$th cyclotomic polynomial by
\begin{displaymath}
\Phi_n(x)=(x-\omega_1)(x-\omega_2) \cdots (x-\omega_s)  \ ,
\end{displaymath}
where $\omega_1, \omega_2, \ldots \omega_s$ are all the distinct
primitive $n$th roots of unity. The degree of $\Phi_n$ is of course equal to $s=\phi(n)$.
 $\Phi_n(x)$ is a monic, irreducible polynomial with integer coefficients.  The first twenty cyclotomic polynomials  are given below:

 \bd
 \begin{array}{lcl}
\Phi_1(x) & =& x-1 \\
\Phi_2(x) & =& x+1 \\
\Phi_3(x) & = &x^2+x+1 \\
\Phi_ 4(x) & =& x^2+1=\Phi_2(x^2)  \\
\Phi_ 5(x) & = &x^4+x^3+x^2+x+1 \\
\Phi_ 6(x) & =& x^2-x+1 =\Phi_3(-x)\\
\Phi_ 7(x) & =&x^6+x^5+x^4+x^3+x^2+x+1 \\
\Phi_8 (x) & =& x^4+1 =\Phi_2(x^4)=\Phi_4(x^2) \\
\Phi_9 (x) & =&  x^6+x^3+1=\Phi_3(x^3)  \\
\Phi_{10} (x) & =&x^4-x^3+x^2-x+1=\Phi_5(-x) \\
\Phi_{11} (x) & =& x^{10}+x^9+\dots +x +1 \\
\Phi_{12} (x) & = &x^4-x^2+1=\Phi_6(x^2) \\
\Phi_{13} (x) & =& x^{12}+x^{11}+\dots +x+1  \\
\Phi_{14} (x) & = &x^6-x^5+x^4-x^3+x^2-x+1=\Phi_7(-x) \\
\Phi_{15} (x) & =& x^8-x^7+x^5-x^4+x^3-x+1 =\frac{ \Phi_3(x^5) }{\Phi_3(x)} =\frac{ \Phi_5(x^3) }{\Phi_5(x)}\\
\Phi_{16} (x) & =&x^8+1=\Phi_2(x^8) =\Phi_8(x^2) \\
\Phi_{17} (x) & =& x^{16}+x^{15}+\dots +x+1  \\
\Phi_{18} (x) & = &x^6-x^3+1=\Phi_9(-x) \\
\Phi_{19} (x) & = & x^{18}+x^{17}+\dots +x+1 \\
\Phi_{20} (x) & =& x^8-x^6+x^4-x^2+1=\Phi_{10} (x^2)
\end{array}
 \ed

   A basic formula for  cyclotomic polynomials is

\begin{equation}
\label{xn} x^n -1= \prod_{d|n} \Phi_d(x) \ ,
\end{equation}
where $d$ ranges over all positive divisors of $n$. This gives a
recursive method of calculating  cyclotomic polynomials.  For
example, if $n=p$,  where $p$ is prime, then $x^p -1=\Phi_1(x) \Phi_p
(x)$ which implies that \bd \Phi_p(x)=x^{p-1}+x^{p-2}+\dots+x+1 \
.\ed

Using M\"obius inversion on (\ref{xn}) we obtain an  explicit formula for cyclotomic polynomials  in terms
of the M\"obius function:

\be \label{mobius} \Phi_n(x)=\prod_{d|n} (x^d-1) ^{ \mu (
\frac{n}{d} ) } \ . \ee

We record the following formulas where $p$ is a prime and $n,m$ are positive integers. The formulas can be found in many elementary  textbooks.

 If $p |m$ then
\be \label{eq3} \Phi_{pm}(x)=\Phi_m (x^p)  \ee

If $p\not|m$
\be \label{eq4} \Phi_{pm}(x) \Phi_m(x)=\Phi_m(x^p)  \ee

\be \label{eq5} \Phi_{p^k}(x)=\Phi_p \left( x^{p^{k-1}} \right)  \ee

If $n$ is odd, $n>1$
\be \label{eq6}  \Phi_{2n}(x)=\Phi_n (-x) \ee

Let $n=p_1^{\alpha_1} \dots p_s^{\alpha_s}$ and define $m=p_1 p_2 \dots p_s$. Then
\be \label{eq7}  \Phi_n(x)=\Phi_m\left( x^{ \frac{n}{m}} \right) \ee

In this paper we deal with the following question:
\smallskip

\begin{question}  Find conditions on $k$ and $n$ so that
$\Phi_k(x^n)$ is irreducible. This gives a necessary condition for
$\Phi_k(a^n)$ to be a prime if $a,n,k \in {\bf Z}^{+}$, $k>1$, $a>1$.
\end{question}

 A more
difficult question which we do not address here is the following.
For fixed $k$ and $n$ (or fixed $k$ and $a$) are there infinitely many primes of the form
$\Phi_k(a^n)$?  e.g. If $n=1$,  are there infinitely many primes of the form $\Phi_k(a) \ a=1,2, 3, \dots$ where $\Phi_k$ is a given cyclotomic polynomial?  Another possibility is to fix  $a$ and $n$.  For example primes of the form  $\Phi_k(1)$, $\Phi_k(2)$ are investigated  in  \cite{gallot}.

\begin{example} [k=4]
 Let us consider the case  $k=4$, i.e.
$\Phi_4(x)=x^2+1$.  We can  rephrase the question as follows: When
is $x^{2n}+1$ irreducible?  It is elementary to see that $n$
 must  be of the form $2^j$ for some $j\ge 0$. Therefore the
polynomial should be of the form \bd x^{2^n} +1 \ . \ed
 In order for $a^{2^n}+1$ to be prime, it is easy to see that $a$
 should be even. In the case $a=2$,  we end-up with the Fermat primes  of
 the form
 \bd F_n=2^{2^n}+1 \ . \ed
 The only known Fermat primes are $F_0$, $F_1$, $F_2$, $F_3$ and $F_4$.
\end{example}

\begin{example}[k=1,2]
For completeness let us consider also the cases $k=1$ and $k=2$.
In the case $k=1$ we have $\Phi_1 (x^n) =x^n -1$ which is always
reducible, unless $n=1$.  Numbers of the
form $a-1$ are prime for infinitely many values of $a$ as was
observed by Euclid. In connection with Question 1 we make the following remark: If  $\Phi_1(a^n)=a^n-1$ is prime then $n=p$ for some prime $p$ and $a=2$.  Therefore for Mersenne primes of the form $2^{p}-1$ it turns out  that the corresponding polynomial $x^p-1$  is  reducible. But this is an exceptional case (That is why we assume $k>1$ in Question 1). The only other exception is when $a=1$ and $k$ has at least two distinct prime divisors.

In the case $k=2$ we have $\Phi_2 (x^n)=x^n+1$. If $n$ is odd,
then $\Phi_2(x^n) $ is divisible by $x+1$. Therefore $n$ must be
even and we end-up with a polynomial of the form $x^{2n}+1$, a
case that has been  already considered.
\end{example}

Most of the results of this paper are well-known. But to our knowledge they are scattered in various
places and the problem in question has not been addressed before. In the next two sections we consider in detail the case $k=3$.   Although the result is stated and proved in two different ways   for this particular case, the method of proof is applicable  for other values of $k$. In section 4  we give a third proof which covers the general case of arbitrary $k$. In section 5 we outline two  other proofs of  theorem \ref{gen}, one due to S. Golomb,  and one due to L. Leroux.

\section{The case $k=3$}

In this section we use the notation \bd p_n(x)=\Phi_3(x^n)=x^{2n}+x^n+1 \ . \ed
It is easy to see that $p_n(x)$ is a product of cyclotomic
polynomials.  We  note that $p_n(x)$ is  a monic polynomial,  with
integer coefficients,  having all of its roots in the
unit disc (in fact on the unit circle).  It follows from a
classical result of Kronecker, see \cite{damianou},   that $p_n(x)$
is  a product of cyclotomic polynomials.  Showing that the roots
of this polynomial are roots of unity is easy. Let $\rho$ be such
a root, i.e. $\rho^{2n}+\rho^n +1=0$.  We let $\lambda=\rho^n$ and
thus $\lambda^2+\lambda+1=0$. In other words $\lambda$ is a
primitive cubic root of unity. Therefore $\rho^{3n}=1$, i.e.,
$\rho$ is a $3n$ root of unity. This fact follows even easier from
the identity \bd x^{3n}-1=(x^n-1)(x^{2n}+x^n+1) \ . \ed

In this section we will prove the following result:

\begin{theorem}\label{k3}
If $a^{2n}+a^n +1$ is prime then $n=3^{j}$ for some $j=0,1,2, \dots $.
\end{theorem}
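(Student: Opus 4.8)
The plan is to exploit the cyclotomic factorization of $p_n(x)=\Phi_3(x^n)$ and convert a nontrivial polynomial factorization into a nontrivial factorization of the integer $p_n(a)$. Throughout I assume $a\ge 2$, as in Question 1; the conclusion genuinely fails for $a=1$, since $p_n(1)=3$ is prime for every $n$. I would prove the contrapositive: if $n$ is not a power of $3$, then $p_n(x)$ splits into at least two cyclotomic factors, so $p_n(a)$ is a product of at least two integers each exceeding $1$ and hence composite.

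First I would pin down the factorization. Combining the identity $x^{3n}-1=(x^n-1)\,p_n(x)$ recorded above with (\ref{xn}), and noting that every divisor of $n$ divides $3n$, gives
\bd p_n(x)=\prod_{d\mid 3n,\ d\nmid n}\Phi_d(x).\ed
Writing $n=3^{t}m$ with $\gcd(m,3)=1$, a divisor $3^{i}e$ of $3n=3^{t+1}m$ (where $e\mid m$) fails to divide $n=3^{t}m$ exactly when $i=t+1$. Hence
\bd p_n(x)=\prod_{e\mid m}\Phi_{3^{t+1}e}(x),\ed
a product of precisely $\tau(m)$ distinct cyclotomic polynomials, $\tau(m)$ being the number of positive divisors of $m$. (When $m=1$ this recovers $p_{3^{t}}(x)=\Phi_{3^{t+1}}(x)$, irreducible by (\ref{eq5}).)

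Finally I would assemble the conclusion. If $n$ is not a power of $3$ then $m>1$, so $\tau(m)\ge 2$ and the last product has at least two factors; setting $f(x)=\Phi_{3^{t+1}}(x)$ and letting $g(x)$ be the product of the remaining (nonempty) factors writes $p_n(x)=f(x)g(x)$ with $f,g$ monic nonconstant integer polynomials. It remains to check $f(a)>1$ and $g(a)>1$. Every index occurring is $3^{t+1}e\ge 3$, and for $d\ge 2$, $a\ge 2$ one has $\Phi_d(a)=\prod_{\zeta}(a-\zeta)$ over the primitive $d$-th roots $\zeta\ne 1$, each factor satisfying $|a-\zeta|>a-1\ge 1$; hence $\Phi_d(a)>1$. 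Thus $f(a),g(a)>1$ and $p_n(a)=f(a)g(a)$ is composite, which is the contrapositive of the theorem. The step requiring the most care---and the main obstacle---is precisely this passage from polynomial reducibility to a genuine \emph{numerical} factorization: one must verify that each cyclotomic factor stays strictly above $1$ at $x=a$, and it is exactly this that forces the standing hypothesis $a\ge 2$ and excludes the degenerate value $a=1$.
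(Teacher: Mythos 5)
Your proof is correct, and its core is essentially the paper's second proof of this theorem (Section 3): both start from the identity $p_n(x)=\prod_{d\mid 3n}\Phi_d(x)/\prod_{j\mid n}\Phi_j(x)$ of equation (\ref{div3}) and detect reducibility by comparing the divisors of $3n$ with those of $n$. You go a little further in two useful ways. First, where the paper only counts the factors via $\tau(3n)-\tau(n)=\tau(m)$, you identify them explicitly as $p_n(x)=\prod_{e\mid m}\Phi_{3^{t+1}e}(x)$; this is exactly Leroux's factorization (\ref{fact}) specialized to $k=3$, which the paper records only later in Section 5. Second, you supply a step the paper leaves implicit: the passage from reducibility of the polynomial to compositeness of the integer $p_n(a)$, by checking that $\Phi_d(a)>1$ for $d\ge 2$ and $a\ge 2$ using $|a-\zeta|>a-1$ for $\zeta\ne 1$ on the unit circle. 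You are also right that this is precisely where the standing hypothesis $a>1$ from Question 1 enters, since $p_n(1)=3$ is prime for every $n$ and the statement read literally would fail there. Both additions are sound and strengthen the argument rather than altering its route.
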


Actually, we will show that $x^{2n}+x^n+1$ is irreducible iff $n=3^{j}$ for some $j=0,1,2, \dots $.

Let us see some examples for small values of $n$
\begin{example}
\bigskip
\noindent

\begin{itemize}
\item $n=1$.
\bd p_1(x)=x^2+x+1=\Phi_3(x) \ed
which is irreducible.
\item
$n=2$
\bd p_2(x)=x^4+x^2+1=(x^2+x+1)(x^2-x+1)=\Phi_3(x) \Phi_6(x) \ . \ed

\item $n=3$ \bd p_3(x)=x^6+x^3+1=\Phi_9(x)  \ed which is
irreducible. \item $n=4$ \bd p_4(x)=x^8+x^4+1=\Phi_3(x) \Phi_6(x)
\Phi_{12}(x) \ . \ed
\item $n=5$

\bd p_5(x)=x^{10}+x^5+1= \Phi_3(x) \Phi_{15}(x) \ . \ed

\item $n=6$

\bd p_6(x)=x^{12}+x^6+1= \Phi_9(x) \Phi_{18}(x) \ . \ed

\end{itemize}
\end{example}

One may wonder about the cyclotomic polynomials that appear in the
factorization of $p_n(x)$.   We will shortly give a formula that
precisely determines the cyclotomic factors of $x^{2n}+x^n+1$.

Let us examine briefly the behavior of $x^{2n}+x^n +1$ for fixed
$n$.
 There is strong experimental evidence that $a^2+a+1$ is
prime for infinitely many values of $a$, i.e. \bd a=1,2,3,5,
6,8,12,14,15,17,20,21,24,27,33,38,41,50,54, 57, 59, 62, 66, 69, 71, \ed
\noindent $ 75,77,78,80, 89,90,99,
\dots  \ . $ There are a  total of 189 such values under 1000.

Similarly, we note  that $a^6+a^3+1$ is likely to be  prime for
infinitely many values of $a$, i.e.

 \bd  a=1,2,3,8,11,20,21,26,30,50, 51,56,60,78,98, \dots  \ . \ed There are a
total of 79 such values under 1000.

Finally, $a^{18}+a^9+1$ is prime  for \bd  a=1,2,11,44,45,56,62,63,110,170,219,234,245,261,263,  \dots \ed  There are a total of 47 values under 1000.

On the other hand, we may fix $x$ and investigate  the behavior of this
expression as a function of $n$. This will be the analogue  of
Fermat primes. For example for $x=2$ we may ask whether \bd 2^{2
3^n}+2^{3^n}+1 \ed is prime for infinitely many values of $n$.  It
is certainly true for $n=0,1,2$. Similarly we may ask whether \bd
3^{2 3^n}+3^{3^n}+1 \ed is prime for certain values of $n$.  It is
true for $n=0, 1$.

We now prove the following lemma, see \cite{nieto}:
\begin{lemma} \label{lemma1}

Let  $f,g \in {\bf Z}[x]$ and denote by $c(f)$ the content of $f$.  If $c(g)|c(f)$ and $g(n)|f(n)$ for infinitely many integers $n$, then $g|f$ in  ${\bf Z}[x]$.

\end{lemma}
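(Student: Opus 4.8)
The plan is to obtain the divisibility first in ${\bf Q}[x]$ via the division algorithm, and then to descend to ${\bf Z}[x]$ using Gauss's Lemma together with the content hypothesis. Since $g \neq 0$, I would begin by dividing $f$ by $g$ in ${\bf Q}[x]$, writing $f = qg + r$ with $q, r \in {\bf Q}[x]$ and $\deg r < \deg g$. Choosing a positive integer $N$ clearing all denominators appearing in $q$ and $r$ gives $Nf = (Nq)g + (Nr)$ with $Nq, Nr \in {\bf Z}[x]$.

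Next I would exploit the hypothesis that $g(n) \mid f(n)$ for infinitely many integers $n$. For each such $n$, evaluating the displayed identity shows $g(n) \mid (Nr)(n)$, since $g(n)$ divides both $Nf(n)$ and $(Nq)(n)\,g(n)$. Here comes the conceptual core, a growth comparison: because $\deg(Nr) < \deg g$, for all sufficiently large $|n|$ we have $|(Nr)(n)| < |g(n)|$, so the divisibility $g(n) \mid (Nr)(n)$ forces $(Nr)(n) = 0$. As this vanishing holds for infinitely many $n$, the polynomial $Nr$ has infinitely many roots and hence is identically zero; thus $r = 0$ and $f = qg$ in ${\bf Q}[x]$.

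Finally I would descend to ${\bf Z}[x]$. Write $g = c(g)g_0$ and $f = c(f)f_0$ with $g_0, f_0$ primitive, and $q = (a/b)q_0$ with $q_0$ primitive, $\gcd(a,b) = 1$, and $b > 0$. By Gauss's Lemma the product $q_0 g_0$ is primitive, so comparing contents in $c(f)f_0 = (a\,c(g)/b)\,q_0 g_0$ yields $c(f) = |a|\,c(g)/b$, that is, $|a|/b = c(f)/c(g)$. The content hypothesis $c(g) \mid c(f)$ makes the right-hand side a positive integer, and since $\gcd(a,b) = 1$ this forces $b = 1$. Hence $q = a q_0 \in {\bf Z}[x]$ and $g \mid f$ in ${\bf Z}[x]$, as required.

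I expect the main obstacle to be this final descent step: the passage from a factorization over ${\bf Q}$ to one over ${\bf Z}$ is precisely where the hypothesis $c(g) \mid c(f)$ is indispensable, and the content bookkeeping via Gauss's Lemma must be carried out carefully to pin down $b = 1$. By contrast, the growth estimate is the clean argument that explains why the assumption of \emph{infinitely many} $n$ is exactly what is needed.
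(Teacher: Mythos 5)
Your proof is correct and follows essentially the same route as the paper's: division in ${\bf Q}[x]$, killing the remainder via the degree comparison at the infinitely many points where $g(n)\mid f(n)$, and then descending to ${\bf Z}[x]$ with Gauss's Lemma and the hypothesis $c(g)\mid c(f)$. The only differences are cosmetic (your direct size estimate $|(Nr)(n)|<|g(n)|$ versus the paper's limit argument, and your explicit primitive-part bookkeeping versus the paper's content identity $m\,c(f)=c(g)\,c(mq)$).
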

\begin{proof}
Divide $f$ by  $g$ in  $\mathbf{Q}[x]$ to obtain
\bd f(x)=g(x) q(x)+r(x)   \ed
with $q(x)$, $r(x)$ in  $\mathbf{Q}[x]$  and deg $r(x) < $ deg $g(x)$.

Let $m$ be the least common multiple of the denominators of $q$ and $r$ so that $mq$ and $mr$ are in $\mathbf{Z}[x]$.
Let $n_i$ be an infinite sequence such that $g(n_i)| f(n_i)$ for all $i$.   Then
\bd m  \frac{  f(n_i) } {g(n_i)}  - m q(n_i) = m \frac{ r(n_i)}{ g(n_i)}  \  . \ed
Note that the left hand side is always an integer. Therefore the right hand side is also an integer for all $i$.   But since the degree of $r$ is strictly less than the degree of $g$  we have
\bd \lim_{i \to \infty} \frac{ m r(n_i)} {g(n_i)} =0  \ . \ed
This implies that $r(n_i)=0$ for big $i$.  As a result $r(x)$ must be identically zero.
We conclude that
\bd f(x) = g(x) q(x)  \ed
with $q(x) \in \mathbf{Q}[x]$.   Now we use Gauss Lemma:
\bd m c(f)=c(g) c(mq)  \ ,  \ed
\bd m \frac{c(f)}{c(g)} =c (mq) \ . \ed
The left hand side is an integer by assumption and a multiple of $m$. This shows that all the coefficients of $m q(x)$ are a multiples of $m$. Therefore, $q(x) \in \mathbf{Z}[x]$.
\end{proof}

The following lemma will be used in the proof of Theorem (\ref{k3}).

\begin{lemma} \label{lemma2}
\bd \Phi_k(x) | \Phi_k(x^n)  \ed
iff  $(n,k)=1$.
\end{lemma}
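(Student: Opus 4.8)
The plan is to translate the polynomial divisibility into a statement about roots. Since $\Phi_k$ is monic with integer coefficients, dividing $\Phi_k(x^n)$ by $\Phi_k(x)$ via the division algorithm already takes place in $\mathbf{Z}[x]$, so $\Phi_k(x)\mid\Phi_k(x^n)$ in $\mathbf{Z}[x]$ holds if and only if it holds in $\mathbf{Q}[x]$. Moreover $\Phi_k$ is irreducible over $\mathbf{Q}$, hence it is the minimal polynomial of each of its roots. Consequently $\Phi_k(x)\mid\Phi_k(x^n)$ precisely when \emph{some} root $\zeta$ of $\Phi_k$ satisfies $\Phi_k(\zeta^n)=0$; and by irreducibility, if this happens for one root it automatically happens for all of them. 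So the whole statement reduces to a single root computation.

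The roots of $\Phi_k$ are exactly the primitive $k$th roots of unity, and the roots of $\Phi_k(x^n)$ are exactly those $\rho$ for which $\rho^n$ is a primitive $k$th root of unity. Thus, fixing a primitive $k$th root of unity $\zeta$, I want to decide when $\zeta^n$ is again a primitive $k$th root of unity. The key input is the elementary fact from the theory of cyclic groups: if $\zeta$ has order $k$, then $\zeta^n$ has order $k/(n,k)$. Hence $\zeta^n$ is a primitive $k$th root of unity, i.e.\ has order exactly $k$, if and only if $(n,k)=1$.

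Assembling the two directions is then immediate. If $(n,k)=1$, then $\zeta^n$ has order $k$, so $\Phi_k(\zeta^n)=0$, and by the reduction in the first paragraph $\Phi_k(x)\mid\Phi_k(x^n)$. Conversely, if $(n,k)=d>1$, then for \emph{every} primitive $k$th root of unity $\zeta$ the element $\zeta^n$ has order $k/d<k$, so it fails to be a primitive $k$th root of unity and $\Phi_k(\zeta^n)\neq 0$; since no root of $\Phi_k$ is a root of $\Phi_k(x^n)$, the divisibility cannot hold. I do not expect a serious obstacle here: the argument is essentially a one-line order computation dressed in the language of divisibility. The only points demanding a moment's care are the passage between $\mathbf{Q}[x]$ and $\mathbf{Z}[x]$ (handled by monicity of $\Phi_k$, or alternatively by Gauss's Lemma) and the use of irreducibility to reduce from all roots to a single root.
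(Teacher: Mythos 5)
Your argument is correct, and it takes a genuinely different, more direct route than the paper's. You reduce the polynomial divisibility to the vanishing of $\Phi_k(\zeta^n)$ at a single primitive $k$th root of unity $\zeta$ (legitimate because $\Phi_k$ is monic and is the minimal polynomial of $\zeta$ over $\mathbf{Q}$), and then the one-line order computation $\mathrm{ord}(\zeta^n)=k/(n,k)$ settles both implications at once. The paper instead proves a stronger classification theorem: the only monic irreducible polynomials $p\in\mathbf{Z}[x]$ satisfying the \emph{integer} divisibility $p(m)\mid p(m^n)$ for infinitely many integers $m$ are $1$, $x$, and the $\Phi_j$ with $(j,n)=1$. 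That route requires Lemma \ref{lemma1} to pass from integer divisibility at infinitely many points to divisibility in $\mathbf{Z}[x]$, and a finiteness argument on the orbit $\zeta,\zeta^n,\zeta^{n^2},\dots$ to force the roots to be roots of unity; only at the very end does it invoke the same order computation you use. What the paper's approach buys is the classification itself, which it then applies to the ``millennial polynomial problem''; what yours buys is brevity and the fact that both directions of the stated equivalence are proved explicitly, whereas the paper's written proof really establishes the ``only if'' direction and leaves the ``if'' direction to the same root considerations you spell out.
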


\begin{proof}

We  prove the following more general result:  Let $k\ge 2$. The only polynomials with coefficients in ${\bf Z}$, monic and irreducible which satisfy  $p(n)| p(n^k)$ for infinite values of $n$ are
\begin{enumerate}
\item $1$
\item $x$
\item $\Phi_j$ with $(j,k)=1$.
\end{enumerate}

Obviously, $1$ and $x$ satisfy the conditions of the Theorem and
we consider them as  trivial solutions. If $p$ is any monic
polynomial satisfying $p(n)| p(n^k)$  for infinite values of $n$
then \bd p(x^k)=p(x) Q(x) \ed for some polynomial $Q(x) \in {\bf
Z}[x] $ (lemma \ref{lemma1}). Let $\zeta$ be a root of $p$.  It follows that $p(
\zeta^k)=p(\zeta)Q(\zeta)=0$ and therefore $\zeta^k$ is also a
root. Repeating the argument we see that \bd \zeta,  \zeta^k,
\zeta^{k^2}, \dots    \ed are all roots of $p(x)$. But $p(x)$ has
only a finite number of roots and therefore two powers of $\zeta$
should be equal. Since $\zeta \not=0$ (otherwise $p$   would be
reducible)  it follows that $\zeta $ is a root of unity. If
$\zeta$ is primitive of order $j$ then all the primitive roots of
order $j$ are also roots of $p$. Therefore $p$ is a multiple of
$\Phi_j$ and being irreducible forces  $p=\Phi_j$.  In order for
$\zeta^k$ to be primitive of order $j$ we must have $(k,j)=1$.
\end{proof}

As an immediate application of the Lemma we note that
\bd \Phi_3(x) | \Phi_3(x^n) \ed  in case $(3,n)=1$.   Therefore the polynomial $p_n(x)$ is reducible in case $n$ is relatively prime to $3$.

As another application of this result we mention the following "millennial Polynomial problem" \cite{caragea}:
Find all monic, irreducible polynomials of degree $2000$ with integer coefficients such that
$p(n)| p(n^2)$ for every natural number $n$. The only solutions are $\Phi_{j}$,  where $j=2525,3333,3765,4125$.

We now proceed to the proof of  theorem (\ref{k3}).

\begin{proof}
Suppose $n=3^{j-1}m$  where $(3,m)=1$ with $m>1$, $j>1$.  First note that
\bd \Phi_{3^j}(x)=\Phi_3 \left( x^{3^{j-1}} \right)=x^{ 2 3^{j-1} }+x^{3^{j-1}}+1  \ . \ed
Therefore
\bd \Phi_{3^j}(x^m)=x^{ 2 m 3^{j-1} }+x^{m 3^{j-1}}+1 =\Phi_3(x^n) \ . \ed

Since $(3^j, m)=1$, it follows that
\bd \Phi_{3^j}(x^m)=\Phi_3 (x^n) \ed
is reducible by lemma \ref{lemma2}.

If $n=3^j$ then
\bd p_n(x)=x^{ 2 3^{j} }+x^{3^{j}}+1=\Phi_{3^{j+1}}(x) \ed
which is irreducible.
\end{proof}

\section{Another Approach}
In this section we give a second proof of theorem 1 and in addition we determine the factorization of $p_n(x)$.
We are still in the case $k=3$.
We have \bd x^{3n}-1=\prod_{d|3n} \Phi_d (x) \ , \ed and

\bd  x^{n}-1=\prod_{j|n} \Phi_j (x) \ . \ed

Since \bd x^{3n}-1=(x^n-1)(x^{2n}+x^n+1) =(x^n-1) p_n(x) \ed

we obtain

\be \label{div3}  p_n(x)=\frac{  \prod_{d|3n} \Phi_d (x) }
{\prod_{j|n} \Phi_j(x)} \ . \ee

This formula can be used to determine the factorization of $p_n$
for specific values of $n$.

\begin{example}\label{ex4}   Take $n=12$.

\bd  p_{12}(x)=\frac{  \prod_{d|36} \Phi_d (x) } {\prod_{j|12}
\Phi_j(x)}=\Phi_9(x) \Phi_{18}(x) \Phi_{36}(x)  \ . \ed

\end{example}

We deduce from formula (\ref{div3}) that the polynomial $p_n(x)$ is
reducible only if the difference between the number of divisors of
$3n$ from the number of divisors of $n$ is strictly bigger than
$1$.  If we denote by $\tau(n)$ the number of positive divisors of $n$ then the condition is simply
 $\tau(3n)-\tau(n) >1$.

We can now give a second proof of theorem (\ref{k3}).

\begin{proof}
Suppose $n=3^{j-1}m$  where $(3,m)=1$ with $m>1$, $j>1$. Then \bd
\tau(3n)-\tau(n)=\tau(3^j m)-\tau(3^{j-1}m)=(j+1)\tau(m)-j
\tau(m)=\tau(m)>1 \ . \ed
\end{proof}

Note that this line of proof generalizes easily for the case where $k=p$ a prime number.
In such a case,

\bd \Phi_p(x)=x^{p-1}+x^{p-2}+ \dots +x+1  \ . \ed

\bd p_n(x)=\Phi_p(x^n)=x^{n(p-1)}+x^{n(p-2)}+ \dots +x^n+1  \ .
\ed

As before we have

\bd x^{pn}- 1=(x^n-1) p_n(x)  \ . \ed

Finally

\be \label{divp}  p_n(x)=\frac{  \prod_{d|pn} \Phi_d (x) }
{\prod_{j|n} \Phi_j(x)} \ . \ee

Suppose $n=p^{j-1}m$  where $(p,m)=1$ with $m>1$, $j>1$. Then \bd
\tau(pn)-\tau(n)=\tau(p^j m)-\tau(p^{j-1}m)=(j+1)\tau(m)-j
\tau(m)=\tau(m)>1 \ed

The conclusion: $p_n(x)$ is irreducible iff $n=p^j$ for $j=0,1,2,
\dots$.

\section{The general case}

We now consider the most general case. We find  conditions on $k$
and $n$ so that $\Phi_k(x^n)$ is irreducible. Suppose
$k=p_1^{\alpha_1}p_2^{\alpha_2} \dots p_s^{\alpha_s}$, with $\alpha_i >0$.  We claim
the following:
\begin{theorem} \label{gen}
$\Phi_k(x^n)$ is irreducible iff $n=p_1^{\beta_1}p_2^{\beta_2}
\dots p_s^{\beta_s}$ for some $\beta_j \ge 0$.
\end{theorem}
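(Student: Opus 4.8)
The plan is to reduce the whole statement to a single observation: the condition on $n$ is equivalent to demanding that every prime factor of $n$ already divides $k$. Writing $m=p_1p_2\cdots p_s$ for the radical of $k$, the hypothesis $n=p_1^{\beta_1}\cdots p_s^{\beta_s}$ with $\beta_j\ge 0$ says precisely that $\mathrm{rad}(n)\mid m$. Both directions then fall out of the elementary identities (\ref{eq4}) and (\ref{eq7}) already recorded in the introduction.

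For the \emph{if} direction I would note that when $\mathrm{rad}(n)\mid m$ the product $kn$ has the very same radical $m$ as $k$, since $k$ already supplies all the primes of $n$. Applying (\ref{eq7}) to $kn$ and to $k$ separately gives $\Phi_{kn}(x)=\Phi_m(x^{kn/m})$ and $\Phi_k(x)=\Phi_m(x^{k/m})$, whence
\bd \Phi_k(x^n)=\Phi_m\!\left(x^{nk/m}\right)=\Phi_{kn}(x). \ed
As $\Phi_{kn}$ is a cyclotomic polynomial it is irreducible, so $\Phi_k(x^n)$ is irreducible. A degree check confirms soundness: $\deg\Phi_k(x^n)=n\phi(k)$ equals $\phi(kn)$ exactly because $k$ and $kn$ have the same prime divisors.

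For the \emph{only if} direction I would prove the contrapositive: if some prime $q\mid n$ fails to divide $k$, then $\Phi_k(x^n)$ is reducible. Since $q\nmid k$, formula (\ref{eq4}) supplies at once the nontrivial factorization $\Phi_k(x^q)=\Phi_{qk}(x)\,\Phi_k(x)$ into two nonconstant cyclotomic factors. I would then use the general principle that reducibility is preserved under the substitution $x\mapsto x^{e}$ for $e\ge 1$: writing $n=q\cdot(n/q)$ and replacing $x$ by $x^{n/q}$ in that identity yields
\bd \Phi_k(x^n)=\Phi_{qk}\!\left(x^{n/q}\right)\,\Phi_k\!\left(x^{n/q}\right), \ed
again a product of two nonconstant polynomials, so $\Phi_k(x^n)$ is reducible.

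The argument is short, so rather than a genuine obstacle the step I would flag as carrying the real content is the bookkeeping that makes each direction rigorous: verifying that the two factors produced by (\ref{eq4}) stay nonconstant after the substitution (immediate, since $n/q\ge 1$) and that their degrees sum correctly to $n\phi(k)$, which rests on $\phi(qk)=(q-1)\phi(k)$. The \emph{if} direction is entirely forced once one reads (\ref{eq7}) as the assertion that $\Phi_k(x^n)$ collapses to the single cyclotomic polynomial $\Phi_{kn}(x)$ exactly when adjoining $n$ introduces no new prime, so no serious difficulty arises there.
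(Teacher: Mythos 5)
Your proposal is correct and follows essentially the same route as the paper: the "only if" direction is the identical contrapositive argument substituting $x\mapsto x^{n/q}$ into (\ref{eq4}), and the "if" direction is the same collapse $\Phi_k(x^n)=\Phi_{kn}(x)$ via (\ref{eq7}), which you merely spell out a bit more explicitly by routing both sides through the radical $m$.
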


\begin{proof}

The proof is based on equation (\ref{eq4})  which we noted in the introduction:  If $p$ does not divide $k$  then

\bd \Phi_{pk}(x) \Phi_k(x)=\Phi_k(x^p)  \ . \ed

For completeness we give a proof of this property.

\bd \Phi_{pk}(x)=\prod_{ d|pk \ p|d} (x^d-1)^{ \mu \left( \frac{pk}{d}\right)}
\prod_{ d|pk \ p\not|d} (x^d-1)^{ \mu \left( \frac{pk}{d} \right)} = \ed

\bd \Phi_k(x^p)\prod_{ d|pk \ p\not|d} (x^d-1)^{ \mu \left( \frac{pk}{d}\right) }
\ . \ed But $\mu \left( \frac{pk}{d} \right)=\mu(p) \mu ( \frac{k}{d} )=-\mu (
\frac{k}{d} )$ and  the result follows.

Suppose $p$ is a prime divisor of $n$ such that $p$ does not
divide $k$. Then $n=p t$ for some integer $t$.

\bd \Phi_k(x^n)=\Phi_k(x^{pt})=\Phi_k( (x^t)^p)=\Phi_{pk} (x^t)
\Phi_k(x^t) \  . \ed
This implies that $\Phi_k(x^n)$ is reducible.
On the other hand if $n=p_1^{\beta_1}p_2^{\beta_2} \dots
p_s^{\beta_s}$ then

\be \label{eqx} \Phi_k(x^n)=\Phi_k \left(x^{p_1^{\beta_1}p_2^{\beta_2} \dots
p_s^{\beta_s} } \right)  =\Phi_k \left(  x^{ \frac{\lambda}{k} }
\right)=\Phi_{\lambda}(x)  \ , \ee
 where $\lambda=\prod_{i=1}^s
p_i^{\alpha_i+\beta_i}=kn $. Therefore, in this case $\Phi_k(x^n)$ is irreducible.
\end{proof}

Note that if we set $n=p$,  where  $p$ is  prime, in equation (\ref{eqx})  we obtain (\ref{eq3}). 

\begin{example} Take $k=6$.  We have $\Phi_6(x)=x^2-x+1$.  As
usual we denote by $p_n(x)=\Phi_6(x^n)=x^{2n}-x^n+1$.  According
to theorem (\ref{gen}),  $p_n(x)$ is irreducible iff $n=2^{\alpha} 3^{\beta}$
for some $\alpha$, $\beta \ge 0$. Indeed

\begin{itemize}
\item $n=1$. \bd p_1(x)=x^2-x+1=\Phi_6(x) \ed  \item $n=2$ \bd
p_2(x)=x^4-x^2+1=\Phi_6(x^2) =\Phi_{6.2}(x)=\Phi_{12}(x) \ed

\item $n=3$ \bd p_3(x)=x^6-x^3+1=\Phi_6(x^3)=\Phi_{18}(x)  \ . \ed

\item $n=4$ \bd p_4(x)=x^8-x^4+1=\Phi_6(x^4)=\Phi_6(
x^{\frac{24}{6} } )=\Phi_{24}(x) \ed  \item $n=5$ \bd
p_5(x)=x^{10}-x^5+1= \Phi_6(x) \Phi_{30}(x) \ . \ed \item $n=6$

\bd p_6(x)=x^{12}-x^6+1= \Phi_6(x^{ \frac{36}{6}})=\Phi_{36}(x)  \
. \ed

\item $n=7$
\bd p_7(x)=x^{14}-x^7+1=\Phi_{6}(x) \Phi_{42}(x) \
. \ed

\end{itemize}
\end{example}

Note that in this example the factorization is determined by the formula:
\bd p_n(x)= \frac {\prod_{d|6n} \Phi_d(x)  \prod_{s|n} \Phi_s(x) } { \prod_{j|2n} \Phi_j(x) \prod_{t|3n} \Phi_t(x) } \ . \ed

\section{Related works}

After the posting of  preprint  \cite{damianou2}  we became aware of  the following two alternative  approaches to  theorem (\ref{gen}).  The first is  due to Solomon  Golomb dating back to 1978.  The proof in \cite{solomon} is the following:

Since the roots of $\Phi_k(x)$ are the primitive  $k$th roots of unity, the roots of $\Phi_k(x^n)$ are the $n$th roots of these, which are the $(nk)$th roots of unity, including all the primitive $(nk)$th roots of unity. Thus $\Phi_{nk}(x)$ divides $\Phi_k(x^n)$, and $\Phi_k(x^n)$ can only be irreducible if it is equal to $\Phi_{nk}(x)$. On the other hand, since all cyclotomic polynomials are irreducible over the rationals, $\Phi_k(x^n)$ is irreducible if and only if it equals $\Phi_{nk}(x)$. Since $\Phi_{nk}(x)$ divides $\Phi_k(x^n)$, these two polynomials are equal if and only if they have the same degree.  Therefore we must have $\phi(kn) =k \phi(n)$.  This condition is equivalent to
\bd \prod_{p |nk} \left( 1 - \frac{1}{p} \right) =\prod_{p |k} \left( 1 - \frac{1}{p} \right)  \ . \ed
Clearly this condition holds iff $n$ has no  prime factors not contained in $k$.

There is also a proof along the same lines   by  Louis Leroux in \cite{leroux} which also includes a nice  formula for factoring $\Phi_k(x^n)$.
Recall that if $n=\prod_i p_i^{\alpha_i}$ then $\alpha_i={\rm ord}_{p_i}(n)$.
Let
\bd m=\prod_{p|k} p^{ {\rm ord}_p \, (n)}  \ed
and $N=\frac{n}{m}$.   Then
\be \label{fact} \Phi_k(x^n)=\Phi_{km} (x^{N})= \prod_{d|N} \Phi_{k m d} (x)  \ . \ee
To prove the first equality it is enough to remark that the two polynomials have the same roots. Since these polynomials  are monic, square free and of the same degree they must be equal.  Similarly, each root of the first polynomial is a root of the third. These two polynomials also have the same degree due to the fact
\bd \sum_{d|N} \phi(d)=N   \ . \ed
If each prime in $n$ is also in $k$ then $m=n$ and $N=1$.  Therefore using (\ref{fact}) we conclude that
$\Phi_k(x^n)=\Phi_{kn}(x)$ which is irreducible. Also note that the number of factors of $\Phi_k(x^n)$ is $\tau(N)$.
Let us repeat example (\ref{ex4}) using formula (\ref{fact}).
\begin{example} \label{ex5}
We have $k=3$ and $n=12$.  The order of $3$ in $12$ is $1$ and therefore $m=3$, $N=4$.   We obtain
\bd \Phi_3(x^{12})=\prod_{d|4} \Phi_{9d}(x)=\Phi_9(x) \Phi_{18}(x) \Phi_{36}(x) \ . \ed
\end{example}

\end{document}